\numberwithin{equation}{section}
\newcommand{\ot}{\otimes}
\newcommand{\Z}{\mathbb{Z}}
\newcommand{\R}{\mathbb{R}}
\newcommand{\N}{\mathbb{N}}
\newcommand{\C}{\mathbb{C}}
\newcommand{\rE}{\operatorname{E}}
\newcommand{\rL}{\mathord{\text{\rm L}}}
\newcommand{\actson}{\curvearrowright}
\newcommand{\recht}{\rightarrow}
\newcommand{\cT}{\mathcal{T}}
\newcommand{\Stab}{\operatorname{Stab}}
\newcommand{\embed}{\prec}
\newcommand{\al}{\alpha}
\newcommand{\be}{\beta}
\newcommand{\vphi}{\varphi}
\newcommand{\Ptilde}{\widetilde{P}}
\newcommand{\bim}[3]{\mathord{\raisebox{-0.4ex}[0ex][0ex]{\scriptsize $#1$}{#2}\hspace{-0.2ex}\raisebox{-0.4ex}[0ex][0ex]{\scriptsize $#3$}}}
\newcommand{\Gam}{\Gamma}
\newcommand{\LGam}{{\rm L}(\Gamma)}
\newcommand{\LH}{{\rm L}(H)}
\newcommand{\LSigma}{{\rm L}(\Sigma)}
\newcommand{\HNN}{\operatorname{HNN}}
\theoremstyle{plain}
\newtheorem{theorem}{Theorem}[section]
\newtheorem{lemma}[theorem]{Lemma}
\newtheorem{proposition}[theorem]{Proposition}
\newtheorem{corollary}[theorem]{Corollary}
\theoremstyle{definition}
\begin{document}

\begin{center}
{\large \bf A NOTE ON THE VON NEUMANN ALGEBRA OF A BAUMSLAG-SOLITAR GROUP}

\bigskip

{\sc Pierre Fima$^{(1,2)}$
\setcounter{footnote}{1}\footnotetext{Partially supported by ERC Starting Grant VNALG-200749.}
\setcounter{footnote}{2}\footnotetext{Institut Math\'ematiques de Jussieu, Universit\'e Paris Diderot, 175 rue de Chevaleret, 75013 Paris.
    \\ E-mail: pfima@math.jussieu.fr}}
\end{center}

\begin{abstract}
\noindent We study qualitative properties of the group von Neumann algebra of a Baumslag-Solitar group. Namely, we prove that, in the non-amenable and {ICC} case, the associated ${\rm II}_1$ factor is prime, not solid, and does not have any Cartan subalgebra.
\end{abstract}

\section{Introduction}

In their breakthrough paper \cite{OP07} Ozawa and Popa obtained new structural results for group (and group measure space) von Neumann algebras. In particular, they showed that ${\rm L}(\mathbb{F}_n)$ is \textit{strongly solid}, i.e. for any diffuse amenable von Neumann subalgebra $Q\subset {\rm L}(\mathbb{F}_n)$, the normalizer of $Q$ generates an amenable von Neumann subalgebra. This strengthened two well-known results: Voiculescu's result \cite{Vo96} showing that ${\rm L}(\mathbb{F}_n)$ has no Cartan subalgebra and Ozawa's result \cite{Oz04} showing that ${\rm L}(\mathbb{F}_n)$ is \textit{solid} (the relative commutant of any diffuse von Neumann subalgebra is amenable) which itself strengthened Ge's result \cite{Ge98} showing that  ${\rm L}(\mathbb{F}_n)$ is \textit{prime} (it is not decomposable into tensor product of ${\rm II}_1$ factors).

In this note we study these properties for Baumslag-Solitar groups factors.

Let $n,m\in\Z-\{0\}$. The Baumslag-Solitar group is defined by ${\rm BS}(n,m):=\langle a,b|\, ab^na^{-1}=b^m\rangle$. It was proved in \cite{Mo91} that ${\rm BS}(n,m)\simeq {\rm BS}(p,q)$ if and only if $\{n,m\}=\{\epsilon p,\epsilon q\}$ for some $\epsilon\in\{-1,1\}$. Moreover, $\Gam$ is known to be non-amenable but inner-amenable and ICC whenever $|n|,|m|\geq 2$ and $|n|\neq |m|$ (see \cite{St06}). Gal and Januszkiewicz \cite{GJ03} proved that ${\rm BS}(n,m)$ has the Haagerup property. Note that their proof also implies that ${\rm BS}(n,m)$ has the complete approximation property (CMAP). Actually one just has to check that the automorphism group of a locally finite tree has the CMAP as a locally compact group (for the compact-open topology). Also, the first ${\rm L}^2$ Betti number of ${\rm BS}(n,m)$ is zero.

 Our results can be summarized as follows.

\begin{theorem}\label{main}
Let $\Gam={\rm BS}(n,m)$. Assume $|n|,|m|\geq 2$ and $|n|\neq |m|$. The following holds.
\begin{enumerate}
\item $\LGam$ is a prime $\rm{II}_1$ factor.
\item $\LGam$ is not solid and does not have any Cartan subalgebra.
\end{enumerate}
\end{theorem}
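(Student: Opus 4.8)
The plan is to run everything through the action of $\Gam=\HNN(\Z,n\Z,\theta)$ on its Bass--Serre tree $\cT$. Write $M=\LGam$; since $\Gam$ is ICC, $M$ is a ${\rm II}_1$ factor, and since $\Gam$ is non-amenable, $M$ is non-amenable. The tree $\cT$ is locally finite (each vertex has degree $|n|+|m|$), with every vertex stabilizer conjugate to $H:=\langle b\rangle\cong\Z$ and every edge stabilizer conjugate to $\langle b^{n}\rangle\cong\Z$. Because the vertex groups are amenable, the standard analysis of groups acting on trees (Ozawa; the HNN/amalgam case is in Brown--Ozawa's book) shows that $\Gam$ is bi-exact relative to the family $\mathcal{H}:=\{\,gHg^{-1}:g\in\Gam\,\}$. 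From the theory of relatively bi-exact groups I would invoke, rather than reprove, two facts: \emph{(i) relative solidity} --- for every diffuse von Neumann subalgebra $P\subseteq M$, either $P\embed_{M}\mathrm{L}(gHg^{-1})$ for some $g$, or $P'\cap M$ is amenable; and \emph{(ii) relative weak compactness} (the relative form of Ozawa--Popa) --- since $\Gam$ has the CMAP, hence is weakly amenable, any Cartan subalgebra $A\subseteq M$ satisfies that $M=\mathcal{N}_{M}(A)''$ is amenable relative to $\mathrm{L}(gHg^{-1})$ for some $g$. I also record the elementary fact that a ${\rm II}_1$ factor $P$ never satisfies $P\embed_{M}\mathrm{L}(gHg^{-1})$, since Popa's intertwining theorem would furnish an injective unital normal $*$-homomorphism from a (${\rm II}_1$-factor) corner of $P$ into a corner of the abelian algebra $\mathrm{L}(gHg^{-1})\cong\LZ$.

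For (1), assume $M=M_{1}\mathbin{\bar\otimes}M_{2}$ with $M_1,M_2$ ${\rm II}_1$ factors; as $M$ is non-amenable, after relabelling $M_{2}$ is non-amenable. Apply (i) to the diffuse subalgebra $M_{1}$: the alternative $M_{1}\embed_{M}\mathrm{L}(gHg^{-1})$ is excluded by the preceding remark, so $M_{1}'\cap M=M_{2}$ must be amenable --- a contradiction. Hence $M$ is prime. For the Cartan half of (2), if $A\subseteq M$ were a Cartan subalgebra, then by (ii) $M$ would be amenable relative to some $\mathrm{L}(gHg^{-1})\cong\LZ$, which is amenable, so by transitivity of relative amenability $M$ would be amenable --- contradicting non-amenability of $\Gam$. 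Thus $M$ has no Cartan subalgebra.

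For the non-solidity half of (2) I would argue directly. Put $\Lambda:=\langle b^{m}\rangle\cong\Z$, so $A:=\mathrm{L}(\Lambda)\cong\LZ$ is diffuse and amenable. From $ab^{n}a^{-1}=b^{m}$ one gets $a^{-1}b^{m}a=b^{n}$, hence $(aba^{-1})\,b^{m}\,(aba^{-1})^{-1}=ab\,(a^{-1}b^{m}a)\,b^{-1}a^{-1}=ab\,b^{n}\,b^{-1}a^{-1}=ab^{n}a^{-1}=b^{m}$; together with the obvious $b\in C_{\Gam}(b^{m})$ this shows $\Sigma:=\langle b,aba^{-1}\rangle\subseteq C_{\Gam}(b^{m})=C_{\Gam}(\Lambda)$, and therefore $\mathrm{L}(\Sigma)\subseteq\mathrm{L}(C_{\Gam}(\Lambda))\subseteq A'\cap M$. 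In $\cT$ the element $b$ stabilizes the vertex $H$ and $aba^{-1}$ stabilizes the adjacent vertex $aH$, the joining edge having stabilizer $H\cap aHa^{-1}=\langle b^{m}\rangle$ (by Britton's lemma, $a^{-1}b^{k}a\in\langle b\rangle$ iff $m\mid k$); so by Bass--Serre theory $\Sigma\cong\langle b\rangle *_{\langle b^{m}\rangle}\langle aba^{-1}\rangle$ --- an amalgam of two copies of $\Z$ over subgroups of indices $|m|$ and $|n|$ (recall $b^{m}=(aba^{-1})^{n}$). Since $|n|\neq|m|$ and $|n|,|m|\geq2$, this amalgam is not infinite dihedral and is hence non-amenable, so $\mathrm{L}(\Sigma)$, and a fortiori $A'\cap M$, is non-amenable; thus $M$ is not solid.

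The primeness and no-Cartan parts are essentially formal once $\Gam$ is placed in the class of weakly amenable groups that are bi-exact relative to a family of amenable subgroups, so the substantive work is (a) carrying out that placement --- deriving the relative bi-exactness from the Bass--Serre action and invoking the CMAP, which one genuinely needs since $\beta_{1}^{(2)}(\Gam)=0$ rules out a Peterson-type $\mathrm{L}^{2}$-rigidity shortcut --- and (b) applying the \emph{relative} structure theorems with the correct intertwining dichotomies. The decisive feature throughout is that the vertex groups are amenable: this is what promotes ``no Cartan except inside $\mathrm{L}(gHg^{-1})$'' to ``no Cartan'', and, conversely, it is precisely why solidity fails. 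The only step that is not machinery is the non-solidity argument, whose whole point is to spot the hidden non-amenable centralizer $\langle b,aba^{-1}\rangle\cong\Z*_{\Z}\Z$ of the amenable subgroup $\langle b^{m}\rangle$ --- the exact way in which $\mathrm{L}(\mathrm{BS}(n,m))$ departs from a free group factor.
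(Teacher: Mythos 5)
Your non-solidity argument is correct and is, up to replacing $a$ by $a^{-1}$, exactly the paper's: the paper exhibits the non-amenable amalgam $\langle b,\,a^{-1}ba\rangle$ inside the centralizer of the diffuse amenable $\langle b^{n}\rangle$, while you exhibit $\langle b,\,aba^{-1}\rangle$ inside the centralizer of $\langle b^{m}\rangle$; your Britton's-lemma computation of the edge stabilizer and the index count are right. For primeness you take a genuinely different route: the paper realizes the HNN extension inside an amalgamated free product over a type ${\rm I}$ subalgebra and quotes the Chifan--Houdayer rigidity theorem, whereas you invoke bi-exactness of $\Gam$ relative to the vertex stabilizers $\{g\langle b\rangle g^{-1}\}$ of the Bass--Serre tree together with Ozawa's relative solidity theorem. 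That route is sound, and your observation that a ${\rm II}_1$ factor can never satisfy $P\embed_{M}\LZ$ is correct; but be aware that the relative bi-exactness of an HNN extension with respect to its vertex group is itself a substantial assertion that you are black-boxing, comparable in weight to the amalgamated-free-product theorem the paper quotes.

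The genuine gap is in the no-Cartan argument. Your ``fact (ii)'' is not a correct statement of the relative Ozawa--Popa (or Popa--Vaes) theorem: what weak amenability plus relative bi-exactness yields for an amenable regular subalgebra $A$ is a \emph{dichotomy} --- either $A\embed_{M}\mathrm{L}(\langle b\rangle)$, or $\mathcal{N}_{M}(A)''$ is amenable relative to $\mathrm{L}(\langle b\rangle)$. The same escape branch appears in the paper's own Lemma \ref{intertwining}, whose hypothesis is precisely $Q\underset{P}{\nprec}M$: no deformation/weak-compactness argument can handle subalgebras sitting inside the base algebra. Your proof only treats the second branch; since $A$ and $\mathrm{L}(\langle b\rangle)$ are both abelian, the first branch is not absurd on its face and must be excluded. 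Fortunately your own centralizer computation closes it: if $A\embed_{M}\mathrm{L}(\langle b\rangle)$ then, $\langle b^{m}\rangle$ having finite index in $\langle b\rangle$, also $A\embed_{M}\mathrm{L}(\langle b^{m}\rangle)$, hence $\mathrm{L}(\langle b^{m}\rangle)'\cap M\embed_{M}A'\cap M=A$; but $\mathrm{L}(\langle b^{m}\rangle)'\cap M$ contains the non-amenable $\mathrm{L}(\langle b,\,aba^{-1}\rangle)$, which has no amenable direct summand and therefore cannot intertwine into the abelian $A$. This is exactly how the paper argues, and it is why the paper obtains the stronger ``robustness'' conclusion that \emph{every} amenable regular subalgebra has non-amenable relative commutant. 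Without this step your proof of the absence of Cartan subalgebras is incomplete.
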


We prove actually a general primeness result for groups acting on trees (see Corollary \ref{primetree}). We also prove a stronger property than the absence of Cartan subalgebra (see Theorem \ref{anti-solid}). Namely, we prove that ${\rm L}(\Gamma)$ is \textit{robust} i.e, the relative commutant of any regular and amenable von Neumann subalgebra is non-amenable.

\section{Preliminaries}

\subsection{Weakly compact actions}

Weakly compact actions were introduced by Ozawa-Popa \cite{OP07}. The following theorem is similar to Theorem 4.9 in \cite{OP07}. The main ingredients of the proof are contained in the proofs of Theorem 4.9 in \cite{OP07} and Theorem B in \cite{OP08} as explained in the proof of Theorem 3.5 in \cite{HS09} (see also Theorem 3.3 in \cite{Ho10}). This result is not stated explicitly in any of these papers but the proof is the same as the one of Theorem 3.5 in \cite{HS09}.

\begin{theorem}\label{OP2}
Let $P$ be a tracial von Nemann algebra that admits the following deformation property: there exists a tracial von Neumann algebra $\Ptilde$, a trace preserving inclusion $P\subset \Ptilde$ and a one-parameter group $(\alpha_s)_{s\in\R}$ of trace-preserving automorphisms of $\Ptilde$ such that 
\begin{itemize}
\item ${\rm lim}_{s\rightarrow 0}||\alpha_s(x)-x||_2=0$ for all $x\in P$.
\item $\bim{P}{{\rm L}^2(\Ptilde)\ominus{\rm L}^2(P)}{P}$ is weakly contained in  $\bim{P}{{\rm L}^2(P)\ot{\rm L}^2(P)}{P}$.
\item There exists $c>0$ such that $||\alpha_{2s}(x)-x||_2\leq c||\alpha_s(x)-E_P\circ\alpha_s(x)||_2$, for all $x\in P,s\in\R$ (transversality).
\end{itemize}
Let $Q\subset P$ be a von Neumann subalgebra and $G\subset\mathcal{N}_{P}(Q)$ be a subgroup such that the action $G\curvearrowright Q$ is weakly compact. If, for all non zero projection $z$ in $\mathcal{Z}(G'\cap P)$, $\alpha_s$ does not converge uniformly on $(zQ)_1$, then $G''$ is amenable.
\end{theorem}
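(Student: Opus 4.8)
The plan is to prove the contrapositive: \emph{if $G''$ is not amenable, then there is a nonzero projection $z\in\mathcal{Z}(G'\cap P)$ such that $\sup_{x\in(zQ)_1}\|\alpha_s(x)-x\|_2\recht 0$ as $s\recht 0$.} First I record the two inputs. Weak compactness of $G\actson Q$ furnishes a net $(\eta_k)$ of unit vectors in the natural positive cone of $\rL^2(Q)\ot\rL^2(Q)$, viewed as Hilbert--Schmidt operators on $\rL^2(Q)$, with $\langle(x\ot 1)\eta_k,\eta_k\rangle\recht\tau(x)$ and $\langle(1\ot\bar x)\eta_k,\eta_k\rangle\recht\tau(x)$ for all $x\in Q$, with $\|(v\ot\bar v)\eta_k-\eta_k\|\recht 0$ for all $v\in\mathcal{U}(Q)$, and with $\|u_g\eta_k u_g^{*}-\eta_k\|\recht 0$ for all $g\in G$, where $u_g\in\mathcal{N}_P(Q)$ denotes the unitary corresponding to $g$. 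On the other hand, by the Hilbert-space characterisation of amenability, non-amenability of $G''$ provides a finite set $F\subset G$ and a constant $\delta_0>0$ such that no unit vector $\zeta$ in a coarse $G''$--$G''$ bimodule satisfies $\max_{g\in F}\|u_g\zeta u_g^{*}-\zeta\|\leq\delta_0$.

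Now I bring in the deformation. Fix $s\in\R$; regarding $\eta_k$ inside $\rL^2(\Ptilde)\ot\rL^2(\Ptilde)$, put $\xi_k^{s}:=(\alpha_s\ot\overline{\alpha_s})(\eta_k)$ and decompose orthogonally $\xi_k^{s}=p_k^{s}+q_k^{s}$, where $p_k^{s}$ is the component in $\rL^2(P)\ot\rL^2(P)$ and $q_k^{s}$ lies in the orthogonal complement of $\rL^2(P)\ot\rL^2(P)$ inside $\rL^2(\Ptilde)\ot\rL^2(\Ptilde)$. Since $\bim{P}{\rL^2(\Ptilde)\ominus\rL^2(P)}{P}$ is, by hypothesis, weakly contained in $\bim{P}{\rL^2(P)\ot\rL^2(P)}{P}$, that orthogonal complement is, as a $P$--$P$ bimodule, weakly contained in the coarse one (its left and right $P$-actions act on separate Hilbert-space tensor legs, each carrying a $P$-module weakly contained in a multiple of the standard module); restricting the actions to $G''\subset P$, the vector $q_k^{s}$ therefore lies in a coarse $G''$--$G''$ bimodule. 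The crucial claim is that, for each fixed $s$ and each $g\in F$, $\lim_k\|u_g q_k^{s}u_g^{*}-q_k^{s}\|=0$, i.e.\ the $\Ad(G)$-near-invariance of $(\eta_k)$ survives the deformation $\alpha_s\ot\overline{\alpha_s}$. Because $\alpha_s$ does not commute with $\Ad(u_g)$ on $\Ptilde$ in general, this is the genuinely delicate step; it is carried out exactly as in the proofs of Theorem 4.9 in \cite{OP07}, Theorem B in \cite{OP08} and Theorem 3.5 in \cite{HS09}, by combining the near-invariance of $(\eta_k)$ with a careful maximality/averaging argument rather than with a naive commutation.

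Granting the claim, the conclusion follows by a dichotomy. If $\liminf_k\|q_k^{s}\|>0$ for some $s$, then for large $k$ the normalised vectors $q_k^{s}/\|q_k^{s}\|$ are unit vectors in a coarse $G''$--$G''$ bimodule with $\max_{g\in F}\|u_g(q_k^{s}/\|q_k^{s}\|)u_g^{*}-q_k^{s}/\|q_k^{s}\|\|\leq\delta_0$, contradicting the choice of $F$ and $\delta_0$; hence $\lim_k\|q_k^{s}\|=0$ for every $s$, which means $\lim_k\|\xi_k^{s}-p_k^{s}\|=0$ and $\lim_k\|p_k^{s}\|=1$. Feeding this, together with the near-traciality conditions on $(\eta_k)$, into the standard computation shows that $\sup_{x\in(zQ)_1}\|\alpha_s(x)-E_P(\alpha_s(x))\|_2\recht 0$ for a nonzero projection $z\in\mathcal{Z}(G'\cap P)$ obtained by the usual maximality argument; the transversality inequality $\|\alpha_{2s}(x)-x\|_2\leq c\|\alpha_s(x)-E_P\circ\alpha_s(x)\|_2$ then upgrades this to $\sup_{x\in(zQ)_1}\|\alpha_{2s}(x)-x\|_2\recht 0$, i.e.\ uniform convergence of $\alpha_s$ on $(zQ)_1$, as required. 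The one real obstacle is the crucial claim of the previous paragraph --- transporting the $\Ad(G)$-invariance of the weakly compact net through the malleable deformation into the coarse bimodule --- which is precisely the ingredient imported from \cite{OP07,OP08,HS09}.
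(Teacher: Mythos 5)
The paper offers no proof of this theorem at all --- it defers entirely to Theorem 4.9 of \cite{OP07}, Theorem B of \cite{OP08} and Theorem 3.5 of \cite{HS09} --- and your outline is indeed the strategy of those sources: deform the weakly compact net $(\eta_k)$ by $\alpha_s\ot\overline{\alpha_s}$, split off the component orthogonal to $\rL^2(P)\ot\rL^2(P)$, observe that this component lives in a bimodule weakly contained in the coarse one, and play the surviving $\Ad(G)$-almost-invariance against non-amenability, with transversality converting the outcome into uniform convergence on a corner. So the approach is the right one. However, two steps of your skeleton would fail as literally written, and they are not mere technicalities.

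First, the ``crucial claim'' is false in the fixed-$s$ form you state. Writing $\eta_k$ as a positive Hilbert--Schmidt operator $T_k$ and using $\Ad(u_g\ot\overline{u_g})\circ(\alpha_s\ot\overline{\alpha_s})=(\alpha_s\ot\overline{\alpha_s})\circ\Ad(\alpha_{-s}(u_g)\ot\overline{\alpha_{-s}(u_g)})$ together with the trace condition on $\eta_k$ (which gives $\|yT_k\|_{HS}=\|y\|_2$), what one actually obtains is
$$\limsup_k\big\|u_g\,\xi_k^{s}\,u_g^{*}-\xi_k^{s}\big\|\;\leq\;C\,\|\alpha_s(u_g)-u_g\|_2 ,$$
which vanishes only as $s\recht 0$, not for each fixed $s$ as $k\recht\infty$. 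Consequently your dichotomy (``$\liminf_k\|q_k^s\|>0$ for some $s$'' versus ``$\lim_k\|q_k^s\|=0$ for every $s$'') cannot be run one $s$ at a time; the references instead perform a joint limit, using the failure of uniform convergence together with transversality to extract, along a sequence $s_n\recht 0$, indices $k_n$ for which $\|q_{k_n}^{s_n}\|$ stays bounded below while the $\Ad(G)$-invariance defect tends to $0$. Second, your spectral-gap reformulation of non-amenability --- that there exist a finite $F\subset G$ and $\delta_0>0$ such that \emph{no} unit vector in a coarse $G''$--$G''$ bimodule is $(F,\delta_0)$-invariant --- is not a correct negation: $G''$ need not be a factor, and if it has an amenable corner there are almost-central unit vectors supported there even though $G''$ is non-amenable. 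The amenability criterion requires the almost-central net to also satisfy the state condition $\langle x\,\zeta_k,\zeta_k\rangle\recht\tau(x)$, and controlling where that state concentrates is precisely the role of the projections $z\in\mathcal{Z}(G'\cap P)$ and of the maximality argument; this is why the sources argue the direct implication (non-uniform convergence on every corner $\Rightarrow$ an almost-central, almost-tracial net in a weakly coarse bimodule $\Rightarrow$ amenability of $G''$) rather than your contrapositive. Both points are repaired in the cited proofs, but as your write-up stands they are gaps, not just deferred details.
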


\subsection{{\rm HNN} extensions of von Neumann algebras}\label{HNNVN}

{\rm HNN} extensions of general von Neumann algebras were introduced in \cite{Ue04}. In this note we follow the approach of \cite{FV10}. Let $(M,\tau_M)$ be a tracial von Neumann algebra and $N\subset M$ a von Neumann subalgebra. Let $\theta\,:\,N\rightarrow M$ be trace-preserving embedding. Let $P=\HNN(M,N,\theta)$ be the HNN extension. We recall that a tracial von Neumann algebra $\Ptilde$ with a trace preserving inclusion $P\subset\Ptilde$ and one parameter group of automorphisms $(\alpha_s)$ of $\Ptilde$ satisfying the first condition of Theorem \ref{OP2} were constructed in \cite{FV10}. It was also observed that $\Ptilde = P *_N (N \ot \rL \Z)$. This implies that, when $N$ is amenable, $\bim{P}{{\rm L}^2(\Ptilde) \ominus {\rm L}^2(P)}{P}$ is weakly contained in the coarse $P$-$P$-bimodule. A detailed argument can be found e.g.\ in \cite[Proposition 3.1]{CH08}. Also, an automorphism $\beta\in\text{Aut}(P)$ such that $\beta \circ \al_s = \al_{-s} \circ \be$ and $\beta(x) = x$ for all $x \in P$ was introduced in \cite{FV10}. Such a deformation is \textit{s-malleable}. As such, it automatically satisfies the following transversality property (see Lemma 2.1 in \cite{Po08}):
$$||\alpha_{2s}(x)-x||_2\leq 2||\alpha_s(x)-E_M\circ\alpha_s(x)||_2\quad\text{for all}\quad x\in M,s\in\R.$$
Hence, if $N$ is amenable, Theorem \ref{OP2} applies to the {\rm HNN} extension $P=\HNN(M,N,\theta)$.

\section{Primeness results for groups acting on trees}

We will use the following proposition. The proof is similar to the one of Theorem 5.2 in \cite{CH08} (even easier because we state it in the finite case).

\begin{proposition}
Let $M_1$ and $M_2$ be finite von Neumann algebras with a common von Neumann subalgebra $B$ of type ${\rm I}$. Let $M=M_1*_BM_2$. Let $p\in M$ be a non zero projection. If $pMp$ is a non amenable ${\rm II}_1$ factor then $pMp$ is prime.
\end{proposition}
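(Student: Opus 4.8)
The plan is to argue by contradiction, following the proof of Theorem~5.2 in \cite{CH08} and using the finiteness of $M$ to dispense with the semifinite technicalities of that proof. Assume $pMp$ is not prime, so that $pMp=P_1\,\overline{\otimes}\,P_2$ (von Neumann tensor product) with $P_1,P_2$ diffuse factors; since $pMp$ is non-amenable, one of them --- say $P_1$ --- is non-amenable. The amalgamated free product carries an $s$-malleable deformation entirely parallel to the {\rm HNN} one recalled above: there are a trace-preserving inclusion $M\subseteq\Mtilde$ and a one-parameter group $(\al_s)$ of trace-preserving automorphisms of $\Mtilde$ with $\|\al_s(x)-x\|_2\to0$ for $x\in M$, satisfying the transversality inequality (\cite{Po08}) and, because $B$ is of type $\mathrm{I}$ and hence amenable, with $\bim{M}{\Lto(\Mtilde)\ominus\Lto(M)}{M}$ weakly contained in the coarse $M$-bimodule. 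A standard perturbation of $(\al_s)$ by a path of unitaries of $\Mtilde$ tending to $1$ lets us assume $\al_s(p)=p$; the restriction to $p\Mtilde p\supseteq pMp$ still enjoys transversality and coarse-containment, and $\bim{pMp}{\Lto(p\Mtilde p)\ominus\Lto(pMp)}{pMp}$, restricted to the subfactor $P_1\subseteq pMp$, is weakly contained in a multiple of the coarse $P_1$-bimodule.

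\emph{Spectral gap.} As $P_1$ is non-amenable, the coarse $P_1$-bimodule does not weakly contain the trivial one, so there are a finite set $F\subseteq\cU(P_1)$ and $\kappa>0$ with $\max_{u\in F}\|u\xi u^{*}-\xi\|_2\ge\kappa\|\xi\|_2$ for every $\xi\in\Lto(p\Mtilde p)\ominus\Lto(pMp)$. For $u\in F$ and $x\in(P_2)_1$ one has $uxu^{*}=x$, hence $u\al_s(x)u^{*}-\al_s(x)=u\al_s(x)u^{*}-\al_s(u)\al_s(x)\al_s(u)^{*}$ and so $\|u\al_s(x)u^{*}-\al_s(x)\|_2\le 2\|\al_s(u)-u\|_2$. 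Applying the spectral-gap estimate to $\xi=\al_s(x)-E_{pMp}\al_s(x)$, and using that $E_{pMp}$ is $\|\cdot\|_2$-contractive and commutes with $\Ad u$, gives $\kappa\|\al_s(x)-E_{pMp}\al_s(x)\|_2\le 2\max_{u\in F}\|\al_s(u)-u\|_2$; transversality then forces $\sup_{x\in(P_2)_1}\|\al_{2s}(x)-x\|_2\to 0$ as $s\to 0$. Thus $\al_s\to\id$ uniformly on $(P_2)_1$.

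\emph{Intertwining, normaliser, and conclusion.} Uniform convergence of $(\al_s)$ on $(P_2)_1$ is exactly the hypothesis fed into Popa's intertwining technique for this deformation: arguing as in the proof of \cite[Theorem~5.2]{CH08}, one gets $P_2\embed_M M_1$ or $P_2\embed_M M_2$; say the former. Since $P_2$ is a ${\rm II}_1$ factor and $B$ is of type $\mathrm{I}$, $P_2\not\embed_M B$. Moreover $P_2$ is regular in $pMp$: $\mathcal{N}_{pMp}(P_2)''\supseteq\cU(P_1)\cup\cU(P_2)$, which generates $P_1\,\overline{\otimes}\,P_2=pMp$. The control of normalisers in amalgamated free products (again as in \cite{CH08}) then shows that, after an amplification, $pMp$ is unitarily conjugate to a subalgebra of $M_1$. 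Let $q_0\in M_1$ be the image of $p$; then $q_0Mq_0$ is a ${\rm II}_1$ factor (isomorphic to $pMp$) contained in $M_1$. Write $q_0=E_B(q_0)+q_0^{\circ}$ with $q_0^{\circ}\in M_1\ominus B$. For $z\in M_2\ominus B$ the element $q_0zq_0\in M_1$ splits into four pairwise orthogonal reduced-word components over $B$, none of which lies in $\Lto(M_1)$; hence all four vanish, and in particular $q_0^{\circ}zq_0^{\circ}=0$, so by freeness $q_0^{\circ}=0$, i.e. $q_0\in B$. Then $q_0M_2q_0\subseteq M_1\cap M_2=B$, and forcing likewise the vanishing of every reduced word through $q_0$ that contains a letter of $M_2\ominus B$ (using $M_1\neq B\neq M_2$) yields $q_0Mq_0\subseteq B$ --- absurd, since a ${\rm II}_1$ factor is not a subalgebra of a type $\mathrm{I}$ von Neumann algebra. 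This contradiction proves that $pMp$ is prime.

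The genuinely hard part is the step producing the intertwining $P_2\embed_M M_i$ from the uniform convergence of the malleable deformation, together with the control of normalisers that conjugates $pMp$ into $M_i$; this is the technical core of \cite[Theorem~5.2]{CH08}. The spectral-gap estimate and the reduced-word computation that closes the argument are, by comparison, routine, the finite setting serving only to lighten the bookkeeping.
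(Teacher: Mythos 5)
Your architecture is the right one and is indeed the route of \cite[Theorem 5.2]{CH08} that the paper silently invokes: split $pMp=P_1\overline{\otimes}P_2$ with $P_1$ non-amenable, run Popa's spectral gap argument against the malleable deformation of the amalgamated free product to get uniform convergence of $(\al_s)$ on $(P_2)_1$, deduce $P_2\embed_M M_i$, rule out $P_2\embed_M B$ because a ${\rm II}_1$ factor admits no normal unital embedding into a corner of an amplification of a finite type ${\rm I}$ algebra, and use the control of normalisers to produce a non-zero projection $q_0\in M_1^n$ with $q_0M^nq_0\subseteq M_1^n$. Up to that point the proof is fine. The endgame, however, does not work. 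First, $q_0^{\circ}zq_0^{\circ}=0$ for all $z\in M_2\ominus B$ does \emph{not} force $q_0^{\circ}=0$: freeness with amalgamation gives $\|aza\|_2^2=\tau\bigl(E_B(a^*a)\,z\,E_B(aa^*)\,z^*\bigr)$ for $a\in M_1\ominus B$, $z\in M_2\ominus B$, and this vanishes for every such $z$ whenever, say, $e\in B$ is a projection with $e(M_2\ominus B)e=0$ and $a\in e(M_1\ominus B)e$; the $B$-bimodule structure can annihilate the word without killing the letter. Second, even granting $q_0\in B$, the assertion $q_0Mq_0\subseteq B$ is false: the orthogonality of reduced words only yields $q_0xq_0=q_0E_{M_1}(x)q_0$ for all $x\in M$, i.e.\ $q_0Mq_0=q_0M_1q_0$, and a corner of $M_1$ can perfectly well be a non-amenable ${\rm II}_1$ factor. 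No contradiction is reached.

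This is not a cosmetic slip, because the Proposition as literally stated (with no non-degeneracy assumption on the inclusions $B\subseteq M_i$) is false: take $A$ a non-amenable, non-prime ${\rm II}_1$ factor, $B=A\oplus\C$, $M_i=A\oplus C_i$ with $\C\subsetneq C_i$ finite; then $p=1_A\oplus 0$ is a central projection of $M=M_1*_BM_2$ killing every reduced word, so $pMp=A$. Consequently the contradiction you seek cannot be extracted from ``$q_0M^nq_0\subseteq M_1^n$ for some non-zero $q_0$'' alone. The missing ingredient is precisely the non-degeneracy hypothesis carried by \cite[Theorem 5.2]{CH08} (and guaranteed in the paper's applications by the condition that the subtrees spanned by $\Gamma\cdot s(e)$ and $\Gamma\cdot r(e)$ are all of $\cT$, resp.\ by $\Sigma,\theta(\Sigma)\neq H$): it is what rules out a corner of $M$ collapsing onto a corner of $M_1$. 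Your proof needs to state that hypothesis explicitly and show where it forces $q_0=0$; as written, the last paragraph proves nothing.
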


The following result is a direct corollary of the preceding Proposition and Remark 4.6 in \cite{FV10}. 

\begin{corollary}\label{primeHNN}
Let $\Gam=\HNN(H,\Sigma,\theta)$ be a non-trivial $\HNN$ extension (i.e. $\Sigma,\theta(\Sigma)\neq H$). Assume that the following conditions are satisfied.
\begin{enumerate}
\item $\Gam$ is non-amenable and {\rm ICC}.
\item $\Sigma$ is abelian or finite.
\end{enumerate}
Then, $\LGam$ is a prime $\rm{II}_1$ factor.
\end{corollary}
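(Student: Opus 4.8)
The plan is to deduce Corollary \ref{primeHNN} from the preceding Proposition by realizing $\LGam$ as an amplification-stable corner of an amalgamated free product over a type I algebra. First I would recall the standard von Neumann algebraic picture of an HNN extension coming from \cite{FV10}: one has $\LGam = \HNN(M,N,\theta)$ with $M=\LH$, $N=\LSigma$ and $\theta$ the trace-preserving embedding induced by the group isomorphism $\Sigma\cong\theta(\Sigma)$. By Remark 4.6 in \cite{FV10}, this HNN extension can be written (after passing to a projection, i.e. up to amplification/compression) as an amalgamated free product $M_1 *_B M_2$ of finite von Neumann algebras over a common subalgebra $B$. The key point for us is that hypothesis (2) --- $\Sigma$ abelian or finite --- forces $B$ to be of type I: if $\Sigma$ is finite then $B$ is finite-dimensional, and if $\Sigma$ is abelian then $B$ is (a piece built from) $\rL(\Sigma)$ together with a finite-dimensional ingredient, hence abelian, and in either case type I. So Remark 4.6 of \cite{FV10} hands us precisely the input to the Proposition.

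Next I would verify the hypotheses of the Proposition on the corner $p(M_1*_B M_2)p$ that is isomorphic to $\LGam$ (or to an amplification $\LGam \ot \M_k(\C)$, which has the same primeness status as $\LGam$ since primeness is an amplification-invariant property for $\rm{II}_1$ factors). Condition (1), that $\Gam$ is non-amenable and ICC, gives directly that $\LGam$ is a $\rm{II}_1$ factor (ICC) that is non-amenable (non-amenability of the group passes to the group von Neumann algebra by Connes). Therefore $p(M_1*_BM_2)p \cong \LGam$ (resp.\ its amplification) is a non-amenable $\rm{II}_1$ factor, which is exactly what the Proposition requires of $pMp$. Applying the Proposition then yields that $\LGam$ is prime.

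The one place that genuinely needs care --- and which I expect to be the main obstacle to spell out --- is the passage through Remark 4.6 of \cite{FV10}: making precise the sense in which a non-trivial HNN extension $\HNN(M,N,\theta)$ is a compression of an amalgamated free product over $N$ (or over a slightly larger algebra containing $N$ and a matrix block), and checking that the amalgam subalgebra $B$ produced there is exactly type I under hypothesis (2) rather than merely amenable. The non-triviality assumption $\Sigma,\theta(\Sigma)\neq H$ is what ensures the HNN extension is genuinely ``large'' so that the free-product description is non-degenerate (otherwise the free product could collapse and the Proposition would have nothing to say). Once that structural translation is in hand, everything else is a matter of bookkeeping: non-amenability and factoriality descend from the group, amplifications do not affect primeness, and the Proposition does the rest. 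I would therefore organize the write-up as: (i) set up $M,N,\theta$; (ii) invoke Remark 4.6 of \cite{FV10} to get $M_1*_BM_2$ with $B$ type I and a projection $p$ with $p(M_1*_BM_2)p\cong\LGam$ up to amplification; (iii) note $\LGam$ is a non-amenable $\rm{II}_1$ factor by condition (1); (iv) apply the Proposition and conclude.
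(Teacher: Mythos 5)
Your proposal is correct and follows exactly the route the paper takes: the paper's entire proof is the one-line observation that the statement is "a direct corollary of the preceding Proposition and Remark 4.6 in \cite{FV10}", i.e.\ realize $\LGam$ as a corner of an amalgamated free product over a type I algebra (type I because $\Sigma$ is abelian or finite) and apply the Proposition to that non-amenable $\rm{II}_1$ corner. You have merely spelled out the bookkeeping that the paper leaves implicit.
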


We now extend this result to groups acting on trees.

\begin{corollary}\label{primetree}
Let $\Gam$ by a group satisfying the following properties.
\begin{itemize}
\item $\Gam$ is non-amenable and {\rm ICC}.
\item $\Gamma$ admits an action $\Gamma \actson \cT$ without inversion on a tree $\cT$ such that there exists a finite subtree with a finite stabilizer and such that there exists an edge $e \in \rE(\cT)$ with the properties that $\Stab e$ is abelian or finite and that the smallest subtrees containing all vertices $\Gamma \cdot s(e)$, resp.\ $\Gamma \cdot r(e)$, are both equal to the whole of $\cT$.
\end{itemize}
Then ${\rm L}(\Gam)$ is a prime $\rm{II}_1$ factor.
\end{corollary}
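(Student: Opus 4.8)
The plan is to invoke Bass--Serre theory to reduce the action $\Gamma\actson\cT$ to a splitting of $\Gamma$ as an amalgamated free product or an HNN extension over $\Stab e$, and then to quote the Proposition above and Corollary~\ref{primeHNN}.

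The first step is to use the edge $e$ to pass to a $\Gamma$-tree with a single orbit of edges. Let $\cT_0\subseteq\cT$ be the subforest obtained by deleting the interiors of all edges in the orbit $\Gamma\cdot e$, and let $\pi\colon\cT\to\cT'$ collapse each connected component of $\cT_0$ to a point. Then $\cT'$ is a tree; $\Gamma$ acts on $\cT'$ without inversion (an inversion would be effected by an element of $\Stab e$, which however fixes $s(e)$ and hence the component containing it); the image of $e$ generates the unique orbit of edges; and the stabilizer of that edge is again $\Stab e$, abelian or finite. Since $e$ is a bridge, its endpoints lie in distinct components of $\cT_0$, so $v:=\pi(s(e))$ and $w:=\pi(r(e))$ are distinct vertices; and $\cT'$ is infinite, since $\Gamma\cdot e$ reduced to one edge would force $\Gamma=\Stab e$ to be amenable. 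By Bass--Serre theory $\Gamma$ is the fundamental group of the one-edge graph of groups $\Gamma\backslash\cT'$, so $\Gamma$ is either an amalgamated free product $A*_C B$ (if the quotient graph is an interval) or an HNN extension $\HNN(A,C,\theta)$ (if it is a loop), where $C=\Stab e$ and $A$, resp.\ $B$, is the stabilizer of the component $\pi^{-1}(v)$, resp.\ $\pi^{-1}(w)$.

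The second step is to check that this splitting is non-degenerate. A collapsing map sends geodesics to geodesics, so $\pi$ carries the convex hull of $\Gamma\cdot s(e)$ onto the convex hull of $\Gamma\cdot v$, and similarly for $r(e)$ and $w$; by hypothesis both hulls in $\cT$ equal $\cT$, so both hulls in $\cT'$ equal $\cT'$. In the amalgam case this forces every vertex of the bipartite tree $\cT'$ to have degree at least two, i.e.\ $[A:C]\ge2$ and $[B:C]\ge2$, so $A*_C B$ is a genuine amalgam. In the HNN case, were $C=A$ then $\HNN(A,A,\theta)$ would be virtually $\Z$ (if $A$ is finite) or metabelian (if $A$ is abelian: the kernel of $\Gamma\to\Z$ is the increasing union of the conjugates $t^nAt^{-n}$, each abelian), hence amenable, contradicting the hypothesis; symmetrically $\theta(C)=A$ is impossible; so $\HNN(A,C,\theta)$ is non-trivial. (The hypothesis that $\cT$ contains a finite subtree with finite stabilizer serves, together with non-amenability, to rule out the remaining degenerate configurations, such as actions factoring through one on a line or with a global fixed point.)

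It remains to pass to von Neumann algebras. In the amalgam case $\LGam\cong\rL(A)*_{\rL(C)}\rL(B)$ with $\rL(C)$ of type~$\mathrm I$ (as $C$ is abelian or finite); since $\Gamma$ is ICC and non-amenable, $\LGam$ is a non-amenable $\mathrm{II}_1$ factor, so the Proposition applied with $p=1$ shows it is prime. In the HNN case, $\Gamma=\HNN(A,C,\theta)$ is a non-trivial HNN extension over the abelian-or-finite group $C$ with $\Gamma$ non-amenable and ICC, so Corollary~\ref{primeHNN} applies directly and $\LGam$ is prime. I expect the main obstacle to lie in the bookkeeping of the reduction: verifying that the collapse $\pi$ behaves exactly as described, and extracting from the tree-theoretic hypotheses --- the two convex-hull conditions and the finite subtree with finite stabilizer, together with non-amenability and the ICC condition --- precisely the non-degeneracy required to enter the Proposition or Corollary~\ref{primeHNN}.
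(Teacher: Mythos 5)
Your argument is correct and follows essentially the same route as the paper: the paper likewise reduces the action to a one-edge splitting over $\Stab e$ --- an amalgamated free product or an HNN extension --- by deferring the Bass--Serre collapsing argument to the proof of Theorem 1.2 in [FV10], and then invokes the Proposition (resp.\ Theorem 5.2 in [CH08]) in the amalgam case and Corollary~\ref{primeHNN} in the HNN case. You have simply written out explicitly the reduction that the paper cites, including the non-degeneracy checks via the convex-hull hypotheses and non-amenability.
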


\begin{proof}
In Corollary \ref{primeHNN} we have seen that the conclusion of the corollary holds for certain HNN extensions. In \cite[Theorem 5.2]{CH08} it was shown that the conclusion also holds for certain amalgamated free product groups. So, it suffices to prove that all groups satisfying the assumptions of the corollary fall into one of both families and this can be done as in the proof of Theorem 1.2 in \cite{FV10}.
\end{proof}

\section{Robustness for certain {\rm HNN} extensions}

We call a von Neumann algebra \textit{robust} if the relative commutant of any regular and amenable von Neumann subalgebra is non-amenable. Clearly, robustness implies the absence of Cartan subalgebra.

Let $(P,\tau)$ be a tracial von Neumann algebra and $A, B\subset M$ be possibly non-unital von Neumann subalgebras. Following \cite[Section 2]{Po03} we say that $A$ \textit{embeds into} $B$ inside $M$, and we denote it by $A \embed_M B$, if there exist $n\in\N$, a non-zero projection $q \in B^n=M_n(\C)\otimes B$, a normal unital $*$-homomorphism $\vphi : A \recht qB^nq$ and a non-zero partial isometry $v \in 1_A(M_{1,n}(\C)\ot M)$ satisfying $a v = v \vphi(a)$ for all $a \in A$. Otherwise, we write $A \not\embed_M B$.

\begin{lemma}\label{intertwining}
Let $P=\HNN(M,N,\theta)$ be an $\HNN$ extension of finite von Neumann algebras and suppose that $N$ is amenable and $P$ has the CMAP. Let $Q\subset P$ a unital von Neumann subalgebra. If $Q$ is amenable and $Q\underset{P}{\nprec} M$ then $\mathcal{N}_P(Q)''$ is amenable.
\end{lemma}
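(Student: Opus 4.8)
The plan is to apply Theorem \ref{OP2} to the HNN extension $P = \HNN(M,N,\theta)$, using the deformation $(\al_s)$ and its companion automorphism $\be$ coming from \cite{FV10}, as recalled in Section \ref{HNNVN}. Since $N$ is amenable, all three hypotheses of Theorem \ref{OP2} are satisfied: the $\rL^2$-continuity of $(\al_s)$ is the first condition stated in Section \ref{HNNVN}; the weak containment of $\bim{P}{\rL^2(\Ptilde)\ominus\rL^2(P)}{P}$ in the coarse bimodule follows from $\Ptilde = P *_N (N\ot\rL\Z)$ with $N$ amenable (via \cite[Proposition 3.1]{CH08}); and transversality is the s-malleability estimate recalled there. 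So it remains to produce, from the amenable subalgebra $Q \subset P$ with $Q\nprec_P M$, a group $G \subset \mathcal{N}_P(Q)$ with $G'' = \mathcal{N}_P(Q)''$ such that $G\actson Q$ is weakly compact, and to verify the non-convergence hypothesis on $(zQ)_1$ for every nonzero central projection $z\in\mathcal{Z}(G'\cap P)$.

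First I would take $G = \mathcal{U}(\mathcal{N}_P(Q)'')$, or rather the unitary group of $\mathcal{N}_P(Q)''$ acting by conjugation; since $Q$ is amenable and $P$ has the CMAP, the action $G\actson Q$ is weakly compact — this is exactly the mechanism of \cite{OP07}: the CMAP of $P$ produces completely positive multipliers that, averaged against the amenable trace on $Q$, yield the net of states on $\overline{Q\ot\overline{Q}}^{\,\|\cdot\|_2}$ witnessing weak compactness, and these states are automatically $\mathcal{N}_P(Q)$-invariant. With $\mathcal{N}_P(Q)''$ in hand as $G''$, Theorem \ref{OP2} will give amenability of $\mathcal{N}_P(Q)''$ as soon as the deformation fails to converge uniformly on $(zQ)_1$ for all nonzero $z\in\mathcal{Z}(\mathcal{N}_P(Q)'\cap P)$.

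The main obstacle — and the heart of the argument — is this last point: showing $\al_s$ does not converge uniformly on $(zQ)_1$. The standard dichotomy for s-malleable deformations on HNN extensions (this is the content of Remark 4.6 and the proof technique in \cite{FV10}, in the spirit of \cite{Po08}) says that if $\al_s \recht \id$ uniformly on the unit ball of a subalgebra $R \subset P$, then $R \embed_P M$. I would apply this with $R = zQz$: uniform convergence on $(zQ)_1$ forces $zQz \embed_P M$, hence (a standard corner/amplification argument, since $z$ is a central projection of $\mathcal{N}_P(Q)'\cap P$ and in particular $z \in Q'\cap P$) $Q \embed_P M$, contradicting the hypothesis $Q\nprec_P M$. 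Thus no such $z$ exists, the hypotheses of Theorem \ref{OP2} are met, and $\mathcal{N}_P(Q)''$ is amenable. The delicate parts to get right are the passage from the corner $zQz$ back to $Q$ in the intertwining relation, and checking that the "spectral gap / non-convergence implies intertwining" lemma of \cite{FV10} is available in precisely the form needed here; both are routine but must be cited carefully.
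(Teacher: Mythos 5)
Your proposal is correct and follows essentially the same route as the paper: the deformation hypotheses of Theorem \ref{OP2} are satisfied because $N$ is amenable (as recalled in Section \ref{HNNVN}), weak compactness of the normalizer action comes from the CMAP of $P$ together with amenability of $Q$ (this is \cite[Theorem 3.5]{OP07}, which the paper cites), and the non-uniform-convergence of $(\al_s)$ on $(zQ)_1$ is deduced from $zQ\underset{P}{\nprec}M$ via \cite[Theorem 3.4]{FV10}, exactly as you outline. The only cosmetic difference is that the paper works directly with the non-unital algebra $zQ$ (using $z\in Q'\cap P$) rather than the corner $zQz$, which sidesteps the amplification step you flag as delicate.
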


\begin{proof}
Let $z\in\mathcal{N}_{P}(Q)'\cap P$ be a non-zero projection. Observe that $z\in Q'\cap P$. As $Q\underset{P}{\nprec} M$ we get $zQ\underset{P}{\nprec} M$. By \cite[Theorem 3.4]{FV10} we get that the deformation $(\alpha_s)$ does not converge uniformly on the unit ball of $zQ$, for all non-zero projection $z\in\mathcal{N}_{P}(Q)'\cap P$. We can apply \cite[Theorem 3.5]{OP07} and  Theorem \ref{OP2} to conclude that $\mathcal{N}_{P}(Q)''$ is amenable.
\end{proof}

\begin{theorem}\label{anti-solid}
Let $\Gam=\HNN(H,\Sigma,\theta)$. Suppose that the following conditions are satisfied.
\begin{enumerate}
\item $H$ is abelian, $2\leq|H/\Sigma|<\infty$ and $3\leq|H/\theta(\Sigma)|$,
\item $\Gam$ has the CMAP.
\end{enumerate}
Then $\LGam$ is robust. If moreover $\Sigma$ is infinite, then $\LGam$ is not solid.
\end{theorem}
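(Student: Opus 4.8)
The plan is to prove robustness by combining Lemma~\ref{intertwining} with the structure theory of HNN extensions: if $Q \subset \LGam$ is regular and amenable, I must show $Q' \cap \LGam$ is non-amenable. Suppose for contradiction that $Q' \cap \LGam$ is amenable. Regularity of $Q$ means $\mathcal{N}_{\LGam}(Q)'' = \LGam$, which is non-amenable since $\Gam$ is non-amenable (condition (1) with $|H/\Sigma|\geq 2$, $|H/\theta(\Sigma)|\geq 3$ forces $\Gam$ to contain a non-abelian free group, e.g.\ via the Bass--Serre tree action having no global fixed point and unequal indices). By the contrapositive of Lemma~\ref{intertwining}, since $\LGam$ has the CMAP by hypothesis (2) and $N := \Sigma$ sits inside $H$ abelian hence is amenable, and since $\mathcal{N}_{\LGam}(Q)'' = \LGam$ is \emph{not} amenable, we must have $Q \embed_{\LGam} M$ where $M := \LH$. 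So there is a corner of $Q$ that intertwines into $\LH$.

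The next step is to exploit this intertwining together with the amenability of $Q$ and the structure of $\LH$. Since $H$ is abelian, $\LH = M$ is itself abelian. The intertwining $Q \embed_{\LGam} \LH$ gives a projection $q \in M_n(\C) \ot \LH$, a unital $*$-homomorphism $\vphi : Q \to q(M_n(\C)\ot\LH)q$, and a partial isometry $v$ with $av = v\vphi(a)$. I would then use this to control the normalizer: one wants to promote ``$Q$ intertwines into $\LH$'' to ``$\mathcal{N}_{\LGam}(Q)''$ intertwines into $\LH$,'' or more precisely to derive a contradiction with $\mathcal{N}_{\LGam}(Q)'' = \LGam$ being everything. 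The standard mechanism (as in Popa's intertwining, and as used for amalgamated free products and HNN extensions) is that if a regular subalgebra $Q$ has $Q \embed M$ and $Q$ is \emph{quasi-regular} or has large normalizer, then either $\LGam \embed M$ (impossible, since $\LGam$ is a non-amenable factor and $\LH$ is amenable — indeed $\LGam$ being prime and non-amenable cannot embed into the amenable $\LH$), or the relative commutant controls things. The cleanest route: from $Q \embed_{\LGam} \LH$ and $Q$ regular, one shows $\LGam \embed_{\LGam} \LH$ using the malleable deformation, a contradiction. This contradiction establishes robustness.

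For the final assertion — non-solidity when $\Sigma$ is infinite — the plan is different and more concrete: I would exhibit a diffuse subalgebra of $\LGam$ whose relative commutant is non-amenable, or rather, recall that solidity would force the relative commutant of every diffuse subalgebra to be amenable, and robustness already gives us non-amenable relative commutants of regular amenable subalgebras; so it suffices to produce a \emph{diffuse} such subalgebra that is also regular, or to directly produce a diffuse subalgebra with non-amenable relative commutant. When $\Sigma$ is infinite (and abelian, being a subgroup of the abelian $H$), $\LSigma$ is a diffuse abelian subalgebra of $M = \LH \subset \LGam$. Its relative commutant $\LSigma' \cap \LGam$ contains $\LH$ (as $H$ is abelian) and contains the stable letter conjugating $\Sigma$-type elements; in fact, because $\Sigma$ is normalized by the stable letter $t$ via $\theta$, the relative commutant is large — it contains a copy of a group von Neumann algebra built from $\langle H, t \rangle$-data that is non-amenable. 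Concretely, $\Sigma' \cap \Gam$ (the centralizer) together with how $t$ acts should generate a non-amenable subgroup whenever the indices are as in (1), giving $\LSigma' \cap \LGam$ non-amenable, hence $\LGam$ not solid.

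I expect the main obstacle to be the second step: rigorously upgrading the corner-intertwining $Q \embed_{\LGam} \LH$ to a contradiction with the regularity of $Q$. This requires the HNN-extension analogue of the ``control of quasi-normalizers'' results (of the type in \cite{FV10}, \cite{CH08}, \cite{OP07}): one needs that in $P = \HNN(M,N,\theta)$, if an amenable $Q$ embeds into $M$ then its normalizer cannot be all of $P$ unless $P$ itself is ``captured'' by $M$, which fails here. Making this precise — likely via a further application of Theorem~\ref{OP2} or \cite[Theorem 3.4]{FV10} to a cut-down where the deformation $\alpha_s$ now \emph{does} converge uniformly on $Q$ (because $Q \embed M$), combined with a maximality/patching argument over the normalizer — is the technical heart. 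The non-solidity statement, by contrast, should be a short explicit computation with the Bass--Serre structure of $\Gam$.
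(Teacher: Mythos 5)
Your first step matches the paper: since $\mathcal{N}_{\LGam}(Q)''=\LGam$ is non-amenable (condition (1) does force $\Gam$ non-amenable), the contrapositive of Lemma~\ref{intertwining} gives $Q\underset{\LGam}{\prec}\LH$. But your second step --- upgrading this to $\LGam\underset{\LGam}{\prec}\LH$ and deriving a contradiction --- is not a workable strategy, and it is precisely the point you flag as the ``technical heart.'' No such upgrade can hold: $Q=\C 1$ is regular, amenable, and trivially satisfies $Q\underset{\LGam}{\prec}\LH$, yet $\LGam\underset{\LGam}{\prec}\LH$ is absurd. The control-of-normalizer theorems for HNN extensions yield dichotomies of the shape ``$\mathcal{N}(Q)''\prec\LH$ or $Q\prec\LSigma$,'' and here the branch $Q\underset{\LGam}{\prec}\LSigma$ is exactly the one that occurs (it follows automatically from $Q\underset{\LGam}{\prec}\LH$ because $[H:\Sigma]<\infty$), so no contradiction is available. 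A symptom of the problem is that your contradiction hypothesis --- amenability of $Q'\cap\LGam$ --- is never actually used. The theorem is not proved by contradicting regularity; one must directly exhibit non-amenability of the relative commutant.

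The missing idea is group-theoretic. Set $G=\langle H,\,t^{-1}Ht\rangle\subset\Gam$. Since $H$ is abelian and $t^{-1}ht\,\sigma=t^{-1}h\theta(\sigma)t=t^{-1}\theta(\sigma)ht=\sigma\,t^{-1}ht$ for all $h\in H$, $\sigma\in\Sigma$, the subgroup $G$ centralizes $\Sigma$ (note that $t$ itself does \emph{not} centralize $\Sigma$, contrary to what your last paragraph suggests); moreover $G\cong H*_{\Sigma}\widetilde{H}$ is non-amenable because the indices are $\geq 2$ and $\geq 3$. The paper then concludes as follows: $Q\underset{\LGam}{\prec}\LH$ and $[H:\Sigma]<\infty$ give $Q\underset{\LGam}{\prec}\LSigma$; dualizing intertwining ($A\prec B$ implies $B'\cap\LGam\prec A'\cap\LGam$) gives $\LSigma'\cap\LGam\underset{\LGam}{\prec}Q'\cap\LGam$; since $\LSigma'\cap\LGam\supset{\rm L}(G)$ and ${\rm L}(G)$ has no amenable direct summand, $Q'\cap\LGam$ is non-amenable. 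This proves robustness with no contradiction argument at all. The same subgroup $G$ settles non-solidity when $\Sigma$ is infinite: $\LSigma$ is diffuse abelian and its relative commutant contains ${\rm L}(G)$. Your sketch of this last part points in the right direction but never identifies $G$ or verifies the centralizing computation, which is the entire content of that step.
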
 

\begin{proof}
Let $\Gamma=\langle H,t\,|\theta(\sigma)=t\sigma t^{-1}\,\forall\sigma\in\Sigma\rangle$. Define $G=\langle H, t^{-1}Ht\rangle\subset\Gam$ and $\Sigma'=\{g\in\Gam\,|\,g\sigma=\sigma g\,\,\text{for all}\,\,\sigma\in\Sigma\}$. As $H$ is abelian, we have $H\subset\Sigma'$. Moreover, for all $\sigma\in\Sigma$ and $h\in H$, we have
$$t^{-1}ht\sigma=t^{-1}h\theta(\sigma)t=t^{-1}\theta(\sigma)ht=\sigma t^{-1}ht.$$
It follows that $t^{-1}Ht\subset \Sigma'$. We conclude that $G\subset\Sigma'$.

Let $\widetilde{H}$ be a copy of $H$ and view $\Sigma$ as a subgroup of $\widetilde{H}$ via the map $\theta$. Define the following group homomorphisms: the first one from $H$ to $G$ is the identity, the second one from $\widetilde{H}$ to $G$ maps $h$ onto $t^{-1}ht$. These groups homomorphisms agree on $\Sigma$ (because we see $\Sigma<\widetilde{H}$ via the map $\theta$). We get a group homomorphism from $H\underset{\Sigma}{*}\widetilde{H}$ to $G$ which is clearly surjective. It is also injective because it maps each reduced word in $H\underset{\Sigma}{*}\widetilde{H}$ onto a reduced word (in the HNN extension sense) in $G$. As $|H/\Sigma|\geq 2$ and $|H/\theta(\Sigma)|\geq 3$, $H\underset{\Sigma}{*}\widetilde{H}$ is not amenable.

Let $Q\subset \LGam$ be an amenable regular subalgebra. By Lemma \ref{intertwining}, $Q\underset{\LGam}\prec \LH$. Because $\Sigma$ has finite index in $H$ we obtain $Q\underset{\LGam}\prec \LSigma$. It follows that $\LSigma'\cap\LGam \underset{\LGam}\prec Q'\cap M$. In particular, ${\rm L}(G)\underset{\LGam}\prec Q'\cap M$. Because ${\rm L}(G)$ has no amenable direct summand, $ Q'\cap M$ is not amenable. If $\LSigma$ is infinite, $\LGam$ is obviously not solid. Actually, $\LSigma$ is a diffuse amenable von Neumann subalgebra and its relative commutant is not amenable as it contains ${\rm L}(G)$.
\end{proof}

We obtain the following obvious corollary.

\begin{corollary}
Let $\Gam={\rm BS}(m,n)$. If $|n|,|m|\geq 2$ and $|n|\neq |m|$ then $\LGam$ is a non-solid $\rm{II}_1$ factor and does not have any Cartan subalgebra.
\end{corollary}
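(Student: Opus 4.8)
The plan is to deduce this from Theorem~\ref{anti-solid} together with the structure theory of Baumslag--Solitar groups. First I would recall that $\mathrm{BS}(n,m) = \langle a,b \mid ab^na^{-1} = b^m\rangle$ is precisely the HNN extension $\HNN(H,\Sigma,\theta)$ where $H = \langle b\rangle \cong \Z$ is infinite cyclic, $\Sigma = \langle b^n\rangle = n\Z$, $\theta(\Sigma) = \langle b^m\rangle = m\Z$, and $\theta$ is the isomorphism $n\Z \to m\Z$ sending $b^n \mapsto b^m$; the stable letter is $t = a$. With this identification, $H$ is abelian, $|H/\Sigma| = |n|$ and $|H/\theta(\Sigma)| = |m|$. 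So I would verify the numerical hypotheses of Theorem~\ref{anti-solid}: after possibly swapping the roles of $n$ and $m$ (which is allowed since $\mathrm{BS}(n,m) \cong \mathrm{BS}(m,n)$, or more simply since one can run the argument symmetrically), the assumption $|n|,|m|\geq 2$ and $|n|\neq|m|$ forces $2 \leq \min(|n|,|m|)$ and $3 \leq \max(|n|,|m|)$, so condition (1) of Theorem~\ref{anti-solid} holds.

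Next I would check condition (2), that $\Gamma$ has the CMAP. This is exactly the remark made in the introduction: Gal--Januszkiewicz \cite{GJ03} proved $\mathrm{BS}(n,m)$ has the Haagerup property, and as noted the argument actually yields the CMAP, since it reduces to the fact that the automorphism group of a locally finite tree has the CMAP as a locally compact group with the compact-open topology, and $\mathrm{BS}(n,m)$ acts properly on its Bass--Serre tree. So both hypotheses of Theorem~\ref{anti-solid} are met, and that theorem gives that $\LGam$ is robust; since robustness implies the absence of a Cartan subalgebra (as observed at the start of Section~4), $\LGam$ has no Cartan subalgebra.

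For the non-solidity and the factoriality statement: the subgroup $\Sigma = \langle b^n\rangle$ is infinite (it is infinite cyclic), so the ``moreover'' part of Theorem~\ref{anti-solid} applies and shows $\LGam$ is not solid --- concretely, $\LSigma$ is a diffuse amenable (indeed abelian) subalgebra whose relative commutant is non-amenable. Finally, to see it is a $\mathrm{II}_1$ factor, I would invoke that $\mathrm{BS}(n,m)$ is ICC whenever $|n|,|m|\geq 2$ and $|n|\neq|m|$ (the fact from \cite{St06} cited in the introduction), so $\LGam$ is a $\mathrm{II}_1$ factor; alternatively this already follows from Theorem~\ref{main}(1) or Corollary~\ref{primeHNN}.

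There is essentially no obstacle here: the corollary is a bookkeeping exercise matching the BS-group presentation to the HNN-extension hypotheses of Theorem~\ref{anti-solid} and checking the CMAP. The only point requiring the tiniest care is the asymmetry $|H/\Sigma| \geq 2$ versus $|H/\theta(\Sigma)| \geq 3$ in Theorem~\ref{anti-solid}, which is handled by choosing the labelling of $n,m$ so that $|m|$ is the larger of the two (using $\mathrm{BS}(n,m)\cong\mathrm{BS}(m,n)$), whence $|n|\neq|m|$ and $|n|,|m|\ge 2$ give $|n|\ge 2$ and $|m|\ge 3$.
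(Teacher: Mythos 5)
Your proof is correct and is exactly the derivation the paper intends: the paper states this as an ``obvious corollary'' of Theorem~\ref{anti-solid} with no written proof, and your verification (identifying ${\rm BS}(n,m)$ as $\HNN(\langle b\rangle,\langle b^n\rangle,\theta)$, relabelling via ${\rm BS}(n,m)\cong{\rm BS}(m,n)$ to meet the $2\le|H/\Sigma|$, $3\le|H/\theta(\Sigma)|$ hypotheses, citing CMAP and ICC from the introduction, and using that $\Sigma\cong\Z$ is infinite for non-solidity) is precisely the bookkeeping being left to the reader. No gaps.
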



\section*{Acknowledgements} Many thanks are due to Stefaan Vaes for several helpful conversations.


\end{document}